\newcommand{\R}{\ensuremath{\mathbb{R}}}
\newcommand{\C}{\ensuremath{\mathbb{C}}}
\newcommand{\sss}{\ensuremath{\mathbb{S}}}
\newcommand{\X}{\mathcal{X}}
\newcommand{\Y}{\mathcal{Y}}
\def\p{\partial}
\newtheorem {theorem} {Theorem}
\newtheorem {proposition} [theorem]{Proposition}
\newtheorem {remark} [theorem]{Remark}
\begin{document}

\title[Darboux theory of integrability on $\sss^n$]
{Darboux theory of integrability for real polynomial vector fields
on $\sss^n$}
\author[J. Llibre and A.C. Murza]
{Jaume Llibre and Adrian C. Murza}
\address{Jaume Llibre, Departament de Matem\`atiques, Universitat Aut\`onoma de Barcelona,
08193 Bellaterra, Barcelona, Spain}
\email{jllibre@uab.es}
\address{Adrian C. Murza, Institute of Mathematics ``Simion Stoilow''
of the Romanian Academy, Calea Grivi\c tei 21, 010702 Bucharest, Romania}
\email{adrian\_murza@hotmail.com}

\keywords{Darboux integrability theory, invariant parallels,
invariant meridians, n--dimensional spheres}

\subjclass[2010]{Primary: 34C07, 34C05, 34C40}

\begin{abstract}
This is a survey on the Darboux theory of integrability for
polynomial vector fields, first in $\R^n$ and second in the
$n$-dimensional sphere $\sss^n$. We also provide new results about
the maximum number of parallels and meridians that a polynomial
vector field $\X$ on $\sss^n$ can have in function of its degree.
These results in some sense extend the known result on the maximum
number of hyperplanes that a polynomial vector field $\Y$ in $\R^n$
can have in function of the degree of $\Y$.
\end{abstract}

\maketitle

\section{Introduction and statement of the main results}

The real nonlinear ordinary differential systems are used to model a
wide range of processes practically all fields of science, from
biology and chemistry to economy, physics and engineering. The
existence of a first integrals of differential systems defined on
$\R^n$ is important for two main things. First, they make easier the
characterization the phase portrait of the system. Secondly, their
existence allow reducing the dimension of the system by one, which
in many cases makes easier the analysis. In our terminology, a
system is {\it integrable} if it has a first integral. Therefore,
the methods to detect the presence of first--integrals and find
their explicit form are extremely important in the qualitative
theory of differential equations.

However, as in many cases occurs, the more important the problem is,
the more difficult are the ways to access it. The techniques to
finding the presence and constructing first--integrals goes back to
Darboux, as far as we know \cite{Da}. Hamiltonians are the vector
fields whose first--integral are easiest to find. If the integrable
vector fields are not Hamiltonian, various techniques have been
developed for analyzing the existence of first integrals, such as
the Noether symmetries \cite{CS}, the already mentioned Darbouxian
theory of integrability \cite{Da} or the Lie symmetries \cite{Ol}.
In fact, Emmy Noether's Theorems represent a relevant example of the
interdisciplinary character acquired by the problem of finding first
integrals. By (roughly speaking), stating that any physical
conservation law has its associated symmetry, it establishes a
connection between mechanics, Lie algebra and differential
equations. Other contributions to this problem are represented by
the Painlev\'e analysis \cite{BGR}, the use of Lax pairs \cite{Lax}
or the direct method \cite{GRZ} and \cite{Hi}, to cite only a few of
them.

We are especially interested in the Darboux theory of integrability
for real polynomial vector fields. This theory provides a method of
constructing first--integrals of polynomial vector fields, based on
the number of invariant algebraic hypersurfaces that they have.
Since its publication in 1878, the method originally developed by
Darboux, has been extended and/or refined by many authors both in
$\R^2$ \cite{CLS, Ch, CL1, CL2, Da, Jo, Ll,
 LZ3, Pe, Po, PS, Sc1, Sc2, Sc3, Si}, and $\R^n$ see
\cite{LB, LM, LZ, LZ1, LZ2, LZ4}.

The first objective of this paper is to present the Darboux theory
of integrability of the real polynomial vector fields on the
$n$-dimensional sphere $\sss^n= \{x\in \R^2 : ||x||=1\}$ (here
$||\cdot ||$ denotes the Euclidean norm of $\R^{n+1}$), and to study
its relation with the maximum number of invariant parallels and
meridians that such vector fields can have. Our second objective is
to show the kind of parallelism that exist between the Darboux
theory of integrability of the real polynomial vector fields on
$\R^n$ and on $\sss^n$, and between the invariant hyperplanes of the
polynomial vector fields in $\R^n$ with the invariant parallels and
meridians of the polynomial vector fields on $\sss^n$.

\subsection{Darboux theory of integrability in $\R^n$ and invariant
hyperplanes}

We consider the following polynomial vector field
\[
\X=\displaystyle{\sum\limits_{i=1}\limits^{n+1}P_i(x_1,\ldots,x_{n+1})\frac{\p}{\p
x_i},\qquad (x_1,\ldots,x_{n+1})\in \mathbb R^{n+1}},
\]
in $\R^{n+1}$, where $P_i$ for $i=1,\ldots,n+1$ are polynomials of
degree at most $m$. If $m_i$ denotes the degree of the polynomial $P_i$,
we say that ${\bf m}=(m_1,\ldots,m_d)$ is the {\it degree} of the
polynomial vector field $\X$. Without loss of generality in the rest
of the paper we assume that $m_1\geqslant\ldots\geqslant m_d.$

Let $\C[x_1,\ldots,x_d]$ be the set of all polynomials in the
variables $x_1,\ldots,x_n$ with complex coefficients; in a similar
way we define $\R[x_1,\ldots,x_d]$. By an {\it invariant algebraic
hypersurface} of the polynomial vector field $\X$ we understand an
algebraic hypersurface $f=f(x_1,\ldots,x_d)=0$ with $f\in
\C[x_1,\ldots,x_d]$ such that for some polynomial $K\in
\C[x_1,\ldots,x_d],$ we have $\X f=Kf$. Moreover, this polynomial
$K$ is called the {\it cofactor} of the invariant algebraic
hypersurface $f=0.$ It is worth to noticing that if the degree of
the polynomial vector field $\X$ is ${\bf m}=(m_1,\ldots,m_d),$ with
$m_1\ge m_2 \ge \ldots \ge m_d$, then any cofactor has at most
degree $m_1-1$. Moreover, the algebraic hypersurface $f=0$ is called
an {\it invariant hyperplane} if the degree of $f$ is $1$.

The invariant algebraic hypersurfaces are the "building blocks" for
the Darbouxian first--integrals. Their property to separate the
phase space of the polynomial vector field $\X$ into invariant
pieces makes easier the study of its dynamics.

While $\X$ is a real polynomial vector field, we are working with
its complex invariant algebraic hypersurfaces, which in some cases
may be real. After stating the next theorem we will rigourously
explain this particular situation. For brevity, we remind here that
in some cases finding the real first integrals of the real
polynomial vector field $\X$ may require the explicit form of the
complex invariant algebraic hypersurfaces.

The invariance property of $f$ under the vector field, expressed by
$\X f=Kf,$ leads to the follow remark. Suppose that an orbit of the
polynomial vector field $\X$ intersects at a point the algebraic
hypersurface $f=0$. In that case, the entire orbit is contained in
$f=0$. In fact we call $f=0$ {\it flow--invariant}, because it is
invariant by $\X$. The converse is also true, see \cite{LM}.

One of the important benefits from refining the Darboux
integrability theory concerns the exponential factors. An {\it
exponential factor} $F(x_1,\ldots,x_n)$ of the polynomial vector
field $\X$ of degree $m$ is defined as an exponential function of
the form $\exp(g/h)$ with $g$ and $h$ polynomials in
$\C[x_1,\ldots,x_{n+1}]$ satisfying $\X F=LF$ for some $L\in
\C_{m_1-1}[x_1,\ldots,x_n]$. Here $\C_{m_1-1}[x_1,\ldots,x_n]$
denotes the set of all polynomials of $\C[x_1,\ldots,x_n]$ of degree
at most $m_1-1$. The exponential factors were introduced by
Christopher \cite{Ch}. They are strongly related to the multiplicity
of the invariant algebraic hypersurface $h=0$ if $h$ is not a
constant; if $h$ is constant it is related to the multiplicity of
the infinity, as discussed in \cite{CLP, LZ1, LZ4}. The fact that an
invariant algebraic hypersurface $h=0$ has multiplicity $k$ for a
polynomial vector field $\X,$ admits the following interpretation.
By doing a small perturbation to the vector field $\X$ within the
class of polynomial vector fields of the same degree, there are
polynomial vector fields $\Y$ near $\X$ having $k$ different
invariant algebraic hypersurfaces such that when $\Y \to \X$ these
$k$ hypersurfaces approach to the hypersurface $h=0$.

Let $U\subset \R^{n}$ be an open set. We say that a real function
$H(x_1,\ldots,x_{n},t): \R^{n} \times \R \to \mathbb R$, is an {\it
invariant} of the polynomial vector field $\X$ on $U$, if
$H(x_1(t),\ldots,x_{n}(t),t)$ is constant for all the values of $t$
for which the orbit $(x_1(t),\ldots,x_{n}(t))$ of $\X$ is contained
in $U$.

If the invariant function $H$ is independent on $t$, it is a {\it first
integral}. If a first integral is a rational function in its
variables, then it is called a {\it rational first integral} of the
vector field $\X$ on $U$.

If the invariant function $H$ is independent on $t$, it is a {\it first
integral}. We say that a {\it rational first integral} of the
vector field $\X$ on $U$ if it is a rational function in its
variables.

In the next theorem we summarize the Darboux theory for polynomial
vector fields in $\R^n$ which has been extended to the Darboux
theory for polynomial vector fields in $\sss^n$.

\begin{theorem}\label{t1}
Suppose that a polynomial vector field $\X$ defined in $\mathbb R^n$
of degree $m= (m_1,\ldots,m_n)$ admits $p$ invariant algebraic
hypersurfaces $f_i=0$ with cofactors $K_i$ for $i=1,\ldots,p$, and
$q$ exponential factors $F_j=\exp(g_j/h_j)$ with cofactors $L_j$ for
$j=1,\ldots,q$. Then the following statements hold.
\begin{itemize}
\item[$(a)$] There exist $\lambda_i,\mu_j\in \mathbb C$ not all
zero such that $\displaystyle{\sum\limits_{i=1}\limits^{p}\lambda_iK_i+
\sum\limits_{j=1}\limits^{q} \mu_jL_j=0}$, if and only if the real
$($multi-valued$)$ function of Darbouxian type
\begin{equation*}\label{e1}
f_1^{\lambda_1}\ldots f_p^{\lambda_p}F_1^{\mu_1}\ldots F_q^{\mu_q},
\end{equation*}
substituting $f_i^{\lambda_i}$ by $\left|f_i\right|^{\lambda_i}$ if
$\lambda_i\in \mathbb R$, is a first integral of the vector field
$\X$.

\smallskip

\item[$(b)$] If $p+q\geqslant \left(\begin{array}{c}n+m_1-1\\m_1-1\end{array}
\right)+1$, then there exist $\lambda_i,\mu_j\in \mathbb C$ not all
zero such that $\displaystyle{\sum\limits_{i=1}\limits^{p}\lambda_iK_i+
\sum\limits_{j=1} \limits^{q}\mu_jL_j=0}$.

\smallskip

\item[$(c)$] There exist $\lambda_i,\mu_j\in \mathbb C$ not all zero
such that $\displaystyle{\sum\limits_{i=1}\limits^{p}\lambda_iK_i+
\sum\limits_{j=1} \limits^{q}\mu_jL_j=-\sigma}$ for some $\sigma\in
\mathbb R\backslash\{0\}$, if and only if the real
$($multi-valued$)$ function
\[
f_1^{\lambda_1}\ldots f_p^{\lambda_p}F_1^{\mu_1}\ldots
F_q^{\mu_q}e^{\sigma t},
\]
substituting $f_i^{\lambda_i}$ by $\left|f_i\right|^{\lambda_i}$ if
$\lambda_i\in\mathbb R$, is an invariant of the vector field $\X$.

\smallskip

\item[$(d)$] The vector field $\X$ has a rational first integral if
and only if
\[
p+q\geqslant\left(\begin{array}{c}n+m_1-1\\m_1-1\end{array}\right)+n.
\]
Moreover, all trajectories are contained in invariant algebraic
hypersurfaces.
\end{itemize}
\end{theorem}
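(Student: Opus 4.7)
The plan is to handle parts (a) and (c) by a single logarithmic differentiation computation, deduce (b) as a pure dimension count, and then build on these to attack (d), which is the truly substantive step.

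For part (a), I would introduce $\Phi=f_1^{\lambda_1}\cdots f_p^{\lambda_p}F_1^{\mu_1}\cdots F_q^{\mu_q}$ as a formal product and compute its Lie derivative. The defining relations $\X f_i = K_i f_i$ and $\X F_j = L_j F_j$ give $\X(\log f_i) = K_i$ and $\X(\log F_j)=L_j$ on the open set where everything is defined, so by the Leibniz/chain rule
\[
\X\Phi \;=\; \Bigl(\sum_{i=1}^{p}\lambda_i K_i + \sum_{j=1}^{q}\mu_j L_j\Bigr)\Phi.
\]
Thus $\Phi$ is a first integral iff the bracket vanishes identically, proving (a). The replacement $f_i^{\lambda_i}\mapsto|f_i|^{\lambda_i}$ for real $\lambda_i$ is the natural way to make $\Phi$ a single-valued real function across the real locus of $f_i=0$. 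Part (c) is the same calculation with an extra $e^{\sigma t}$ factor: along orbits of $\X$ the total time derivative is $\bigl(\sum\lambda_i K_i+\sum\mu_j L_j+\sigma\bigr)\Phi e^{\sigma t}$, so the invariance condition is exactly $\sum\lambda_i K_i+\sum\mu_j L_j=-\sigma$.

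For part (b), the essential remark is that every cofactor $K_i$ and every $L_j$ lies in the complex vector space $\C_{m_1-1}[x_1,\ldots,x_n]$ of polynomials of degree at most $m_1-1$, which has dimension $\binom{n+m_1-1}{m_1-1}$. Viewing the cofactors as a list of $p+q$ vectors in this space, if $p+q\geq\binom{n+m_1-1}{m_1-1}+1$ they must be linearly dependent, and any nontrivial linear relation supplies the scalars $(\lambda_i,\mu_j)$. This is just rank-nullity applied to the cofactor map $\C^{p+q}\to\C_{m_1-1}[x_1,\ldots,x_n]$ sending $(\lambda,\mu)\mapsto \sum\lambda_iK_i+\sum\mu_jL_j$.

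Part (d) is where the real work lies. The easy implication is $(\Leftarrow$ on rationality$)$: if $H=F/G$ is a rational first integral, then for every $c\in\C$ the polynomial $F-cG$ defines an invariant algebraic hypersurface (with cofactor equal to that of $G$, as one checks from $\X(F/G)=0$), yielding infinitely many invariant hypersurfaces and a fortiori more than the stated bound. In the opposite direction, the hypothesis $p+q\geq\binom{n+m_1-1}{m_1-1}+n$ together with rank-nullity applied to the cofactor map forces the kernel to have dimension at least $n$, i.e.\ there exist $n$ linearly independent tuples $(\lambda^{(k)},\mu^{(k)})$ each producing a Darboux first integral $\Phi_k$ by part (a). Equivalently, the closed logarithmic $1$-forms $\omega_k=\sum_i\lambda_i^{(k)}df_i/f_i+\sum_j\mu_j^{(k)} d(g_j/h_j)$ are each annihilated by $\X$ and are $\C$-linearly independent as logarithmic forms with poles on the $f_i=0$. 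I would then invoke the Darboux–Jouanolou mechanism: $n$ such independent closed $\X$-invariant logarithmic $1$-forms on an $n$-dimensional ambient space force an algebraic (and in fact rational) relation among the $\Phi_k$, producing a rational first integral; the final clause that all trajectories lie in invariant algebraic hypersurfaces is then immediate from the $(\Leftarrow)$ construction $F-cG=0$.

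The main obstacle, as expected, is this last step in (d): passing from \emph{many linear dependencies among cofactors} to \emph{existence of a rational first integral} is not a bookkeeping argument but requires the Darboux–Jouanolou theorem on closed logarithmic forms. Everything else reduces to the one-line computation $\X\Phi=(\sum\lambda_iK_i+\sum\mu_jL_j)\Phi$ combined with counting dimensions of polynomial spaces.
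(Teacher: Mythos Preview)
The paper does not actually give its own proof of this theorem: immediately after the statement it writes ``For a proof of Theorem~\ref{t1} see for instance \cite{DLA, LZ1, LZ2, LZ3}.'' So there is nothing in the paper to compare against beyond the pointer to the literature.

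That said, your sketch is exactly the standard argument one finds in those references. The logarithmic-derivative identity $\X\Phi=\bigl(\sum_i\lambda_iK_i+\sum_j\mu_jL_j\bigr)\Phi$ settles (a) and (c); the dimension of $\C_{m_1-1}[x_1,\ldots,x_n]$ being $\binom{n+m_1-1}{m_1-1}$ together with rank--nullity gives (b); and you correctly isolate (d) as the only place where a genuinely nontrivial input (the Darboux--Jouanolou theorem) is required, with the easy direction coming from the pencil $F-cG=0$. One small caution on (d): the ``if and only if'' in the paper's phrasing is a little loose, since $p$ and $q$ are fixed by hypothesis; the intended content is the Jouanolou-type statement that sufficiently many invariant algebraic hypersurfaces/exponential factors force a rational first integral, and conversely a rational first integral yields infinitely many invariant algebraic hypersurfaces. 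Your write-up reflects that intended meaning correctly.
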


For a proof of Theorem \ref{t1} see for instance \cite{DLA, LZ1,
LZ2, LZ3}.

In the statement of the Darboux theory of integrability for
polynomial vector fields in $\R^n$ given in Theorem \ref{t1} we only
have taken into account partially the multiplicity of the invariant
algebraic hypersurfaces $f_i=0$ through the existence of integrating
factors, for more details see \cite{CLP} and \cite{LZ1}.

In statement (a) of Theorem \ref{t1} we said that the function
\eqref{e1} is real. This is due to the following fact. Since the
vector field $\X$ is real, it is well known that if a complex
invariant algebraic hypersurface or exponential factor appears, then
its conjugate has to appear simultaneously. If among the invariant
algebraic hypersurfaces of $\X$ a complex conjugate pair $f=0$ and
$\overline f=0$ occur, then the first integral \eqref{e1} has a real
factor of the form $f^{\lambda}{\bar f}^{\,\bar\lambda}$, which is
the multi-valued real function
\[
\left[\left(\mbox{Re}\,f\right)^2+\left(\mbox{Im}\,
f\right)^2\right]^{\mbox{Re}\,
\lambda}\,\mbox{exp}\left(-2\,\mbox{Im}\,\lambda\,\mbox{arctan}\,
\left(\frac{\mbox{Im}\,f}{\mbox{Re}\, f}\right)\right),
\]
if $\mbox{Im}\,\lambda\,\mbox{Im}\,f\not\equiv 0$. If among the
exponential factors of $\X$ a complex conjugate pair
$F=\mbox{exp}(h/g)$ and $\overline F=\mbox{exp}(\overline
h/\overline g)$ occur, the first integral \eqref{e1} has a real
factor of the form
\[
\left(\mbox{exp}\left(\frac{h}{g}\right)\right)^{\mu}\left(\mbox{exp}
\left(\frac{\overline h}{\overline g}\right)\right)^{\overline
\mu}=\mbox{exp}\left(2\,\mbox{Re}\,\left(\mu\,\frac{h}{g}\right)\right).
\]

One of the best tools for searching for invariant algebraic
hypersurfaces is the {\it extactic polynomial of $\X$ associated to
$W$}. To our knowledge it was first mentioned in the work of
Lagutinskii, see \cite{SW}. To define it let $W$ be a finitely
generated vector subspace of vector space $\C[x_1,\ldots,x_d]$. The
{\it extactic polynomial of $\X$ associated to $W$} is
\begin{equation*}\label{21}
{\mathcal E}_W={\mathcal E}_{\{v_{1}, \ldots, v_{l}\}}(\X) =
\det\left(
\begin{array}{cccc}
v_{1} & v_{2} & \ldots & v_{l} \\
\X(v_{1}) & \X(v_{2}) & \ldots & \X(v_{l}) \\
\vdots & \vdots & \ldots & \vdots \\
\X^{l-1}(v_{1}) & \X^{l-1}(v_{2}) & \ldots & \X^{l-1}(v_{l})
\end{array}
\right)=0,
\end{equation*}
where $\{v_{1}$, \ldots, $v_{l}\}$ is a basis of $W$, $l$ is the
dimension of $W$, and $\X^{j}(v_{i})= \X^{j-1}(\X(v_{i}))$. It is
well known that one of the main properties of the extactic
polynomial is that its definition does not dependent of the chosen
basis of $W.$

In this paper the reason to using the extactic polynomial ${\mathcal
E}_{W}(\X)$ is twofold. Firstly, it allows detecting algebraic
hypersurfaces $f=0$ with $f\in W$ such that they are invariant by
the polynomial vector field $\X$, see the following proposition
proved in \cite{CLP}. Secondly, it allows to defining and computing
the multiplicity of invariant algebraic hypersurfaces.

Even if the next proposition is stated for complex polynomial vector
fields, it is very useful for our later considerations. This is so,
because we deal with real polynomial vector fields, which are
particular cases of complex ones.

\begin{proposition}\label{p1}
Let $\X$ be a polynomial vector field in $\C^d$ and let $W$ be a
finitely generated vector subspace of $\C[x_1,\ldots,x_d]$ with
$\dim (W)>1 $. Then every algebraic invariant hypersurface $f=0$ for
the vector field $\X$, with $f\in W$, is a factor of the polynomial
${\mathcal E}_{W}(\X)$.
\end{proposition}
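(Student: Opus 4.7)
The plan is to exploit the invariance relation $\X f = Kf$ in order to produce an explicit $f$-factor in one column of the matrix defining the extactic polynomial, once we choose a basis of $W$ adapted to $f$. The argument has three natural steps.

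First, I would recall (and briefly justify) the basis-independence of $\mathcal{E}_W(\X)$ mentioned just before the statement. If $\{v_1,\ldots,v_l\}$ and $\{v_1',\ldots,v_l'\}$ are two bases of $W$ related by an invertible constant matrix $A\in \mathrm{GL}_l(\C)$, then because $\X$ is $\C$-linear and $A$ has scalar entries, the whole defining matrix transforms by right multiplication by $A$; hence the two extactic polynomials differ by the nonzero scalar $\det A$, and in particular they have the same polynomial factors.

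Second, I would prove by induction on $j\ge 0$ that whenever $\X f = Kf$, every iterate $\X^j(f)$ is divisible by $f$. Concretely, setting $K_0=1$ and assuming $\X^j(f)=K_j f$ with $K_j\in\C[x_1,\ldots,x_d]$, the Leibniz rule for the derivation $\X$ yields
\begin{equation*}
\X^{j+1}(f) \;=\; \X(K_j f) \;=\; \X(K_j)\,f + K_j\,\X(f) \;=\; \bigl(\X(K_j)+K_j K\bigr)\,f,
\end{equation*}
so $K_{j+1}=\X(K_j)+K_j K$ is again a polynomial, completing the induction.

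Third, assuming $f\ne 0$ (else there is nothing to prove), I would extend $f$ to a basis $\{v_1=f, v_2,\ldots,v_l\}$ of $W$. By the first step it suffices to show that $f$ divides $\mathcal{E}_W(\X)$ computed in this basis. By the second step, the first column of the defining matrix is $(f,\,K_1 f,\,K_2 f,\ldots,K_{l-1} f)^{T}$, every entry of which is divisible by $f$. Pulling $f$ out of the first column via multilinearity of the determinant then exhibits $f$ as a polynomial factor of $\mathcal{E}_W(\X)$.

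I do not anticipate any real obstacle: the only substantive content is the inductive identity $\X^j(f)=K_j f$, and everything else is linear-algebraic bookkeeping (basis-independence and column multilinearity). The hypothesis $\dim W>1$ is used only to ensure that the extactic matrix has more than one row, so that the column-factoring step is not vacuous.
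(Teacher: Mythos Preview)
Your argument is correct and is precisely the standard proof. Note, however, that the paper does not actually prove this proposition: it states the result and refers the reader to \cite{CLP} for the proof. The proof you outline---choosing a basis of $W$ containing $f$, using $\X f=Kf$ inductively to show $\X^j(f)=K_j f$ for polynomials $K_j$, and then factoring $f$ from the first column of the determinant---is exactly the argument given in that reference, so there is nothing to compare beyond saying you have reproduced it faithfully. One minor quibble: your closing remark about the role of $\dim W>1$ is not quite right, since even when $l=1$ the determinant is just $v_1=f$ and is trivially divisible by $f$; the hypothesis is there so that the extactic polynomial is a nontrivial object, not because the divisibility argument would otherwise fail.
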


{From} Proposition \ref{p1} it follows that $f=0$is an invariant
hyperplane of the polynomial vector field $\X$ if the polynomial $f$
is a factor of the polynomial ${\mathcal E}_{W}(\X)$, where $W$ is
generated by $\{1,x_1,\ldots,x_d\}$. From \cite{CLP} {\it the
invariant hyperplane $f=0$ has multiplicity $k$} if $k$ is the
greatest positive integer such that $f^k$ divides the polynomial
${\mathcal E}_{W}(\X)$.

An important result on the maximum number of invariant hyperplanes
that a polynomial vector field in $\R^n$ can have was proved in
\cite{LM}. The analogous result for polynomial vector fields in
$\R^2$ was proved before in \cite{AGL}.

\begin{theorem}\label{t2}
Assume that a polynomial vector field $\X$ in $\R^n$ or $\C^n$ with
$n\ge 2$ of degree ${\bf m}= (m_1,\ldots,m_d)$ with $m_1\ge \ldots
\ge m_n$ has finitely many invariant hyperplanes. Then
\begin{enumerate}
\item{}\label{part1} the number of
invariant hyperplanes of $\X$ taking into account their
multiplicities is at most
\begin{equation*}\label{3}
\displaystyle{\left(\begin{array}{c} n\\
2 \end{array} \right)(m_1-1)+\left( \sum_{k=1}^n m_k\right)};
\end{equation*}
\item{}\label{part2} the number of
invariant hyperplanes of $\X$ through a single point taking into account their
multiplicities taking into account their
multiplicities is at most
\begin{equation*}\label{31}
\displaystyle{\left(\begin{array}{c} n-1\\
2 \end{array} \right)(m_1-1)+\left( \sum_{k=1}^{n-1} m_k\right)+1}.
\end{equation*}
\end{enumerate}
Moreover there are complex polynomial vector fields which reach this
upper bound for their number of invariant hyperplanes taking into
account their multiplicities.
\end{theorem}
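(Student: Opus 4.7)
The plan is to derive both bounds from Proposition \ref{p1} by choosing the vector subspace $W$ appropriately, and then bounding the degree of the corresponding extactic polynomial $\mathcal{E}_W(\X)$. The key ingredient used throughout is the elementary estimate $\deg \X(Q)\leq \deg Q+(m_1-1)$, valid for any polynomial $Q$, which by induction gives $\deg \X^{k}(Q)\leq \deg Q+k(m_1-1)$.

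For part (1), I would take $W=\langle 1, x_1, \ldots, x_n\rangle$, of dimension $n+1$. Proposition \ref{p1}, together with the definition of multiplicity of an invariant hyperplane as the highest power dividing $\mathcal{E}_W(\X)$, immediately gives that the sum of the multiplicities of the invariant hyperplanes of $\X$ is at most $\deg \mathcal{E}_W(\X)$. Since $\X(1)=0$, the first column of the $(n+1)\times (n+1)$ matrix defining $\mathcal{E}_W(\X)$ has a single nonzero entry, so expanding along it reduces $\mathcal{E}_W(\X)$ to an $n\times n$ determinant with $(i,j)$-entry $\X^{i}(x_j)$. Applying the Leibniz expansion, any nonzero term has degree at most
\[
\sum_{i=1}^n\bigl(m_{\sigma(i)}+(i-1)(m_1-1)\bigr)=\sum_{k=1}^n m_k+\binom{n}{2}(m_1-1),
\]
independently of the permutation $\sigma$, which yields the bound in (1).

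For part (2), after an affine translation I can assume the distinguished point is the origin, which does not change the number nor the multiplicities of the invariant hyperplanes through it. Such hyperplanes have the form $\sum_i a_i x_i=0$, so I apply Proposition \ref{p1} with $W'=\langle x_1,\ldots,x_n\rangle$ of dimension $n$. The degree computation is analogous but with the first row contributing polynomials of degree $1$ (rather than a single constant), hence each term of the Leibniz expansion of the $n\times n$ determinant has degree at most
\[
1+\sum_{i=2}^n\bigl(m_{\sigma(i)}+(i-2)(m_1-1)\bigr)=1+\sum_{k\neq \sigma(1)}m_k+\binom{n-1}{2}(m_1-1),
\]
which, using $m_1\ge\ldots\ge m_n$, is maximized by taking $\sigma(1)=n$ and yields the bound in (2).

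The main obstacle is the sharpness statement. For this I would construct explicit complex polynomial vector fields $\X$ of degree $\mathbf{m}$ for which $\mathcal{E}_W(\X)$ factors into a product of distinct linear polynomials whose total degree equals the claimed upper bound. A natural candidate uses factorized components of the form $P_i=\prod_{k=1}^{m_i}(x_i-a_{ik})$ to generate $\sum_k m_k$ axis-aligned invariant hyperplanes, supplemented by generic linear perturbations chosen to produce exactly the $\binom{n}{2}(m_1-1)$ further independent invariant hyperplanes predicted by the bound; verifying sharpness then reduces to checking that the corresponding extactic polynomial splits completely and that each chosen linear form is indeed invariant, which is a finite verification.
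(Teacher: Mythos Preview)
The paper does not actually prove Theorem~\ref{t2}: it is quoted as a known result from \cite{LM}, so there is no ``paper's own proof'' to compare against. That said, your argument for the two upper bounds is correct and is precisely the extactic-polynomial technique the paper itself deploys in the proof of Theorem~\ref{t4} (which is essentially your part~(2) specialized to hyperplanes containing the $x_{n+1}$--axis). Your degree count via the Leibniz expansion is clean; the only implicit step you should make explicit is that the hypothesis of finitely many invariant hyperplanes guarantees $\mathcal{E}_W(\X)\not\equiv 0$, so that the degree bound is meaningful.

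The genuine gap is in the sharpness claim. Your proposed construction --- take $P_i=\prod_{k}(x_i-a_{ik})$ and then add ``generic linear perturbations'' to manufacture the remaining $\binom{n}{2}(m_1-1)$ invariant hyperplanes --- is not a proof but a hope. A perturbation generically destroys the factored structure of the $P_i$, so the $\sum_k m_k$ axis-aligned hyperplanes will typically cease to be invariant; and there is no mechanism given for why a perturbation should create exactly $\binom{n}{2}(m_1-1)$ new invariant hyperplanes rather than none. ``A finite verification'' is not available here because you have not specified a single concrete vector field to verify. To establish sharpness you need either an explicit family (as in \cite{LM} or \cite{AGL} for $n=2$) whose extactic polynomial you can factor completely into distinct linear forms, or a deformation argument showing the bound is attained in the limit via multiplicities; as written, this part is missing.
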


\begin{remark}
Knowing a sufficient number of invariant hyperplanes such that
their cofactors satisfy the assumptions either of statement (a) or
of statement (b) of Theorem \ref{t1} enable us to find a first integral of
the polynomial vector field $\X$.
\end{remark}

\subsection{Darboux theory of integrability in $\sss^n$ and invariant
parallels and meridians}

Before stating our results, we need a few preliminary definitions
for hypersurfaces we are dealing with. First we give the general
introductory notions valid for the application of the Darboux theory
of integrability on any smooth hypersurfaces; then we will focus on
the $n$--dimensional sphere $\sss^n$ and its invariant meridians and
parallels.

Let $G:\R^{n+1}\to\R$ be a $C^1$ map. A hypersurface
${\bf\Omega}=\{(x_1,\ldots,x_{n+1})$ $\in \R^{n+1}:
G(x_1,\ldots,x_{n+1})=0\}$ is said to be {\it regular} if the
gradient $\nabla G$ of $G$ is not equal to zero on $\bf \Omega$. Of
course, if $\Omega$ is regular, then it is smooth. We say that
$\Omega$ is {\it algebraic} if $G$ is an irreducible polynomial. If
the degree of the polynomial $G$ is $d$, then we say that $\Omega$
is algebraic of {\it degree} $d$. A {\it polynomial vector field}
$\X$ {\it on the regular hypersurface} $\Omega$ (or simply a {\it
polynomial vector field on} $\Omega$) is a polynomial vector field
$\X$ in $\R^{n+1}$ satisfying
\begin{equation}\label{on}
(P_1,\ldots,P_{n+1})\cdot\nabla G=0\,\, \mbox{ on the points of
}\Omega,
\end{equation}
where the dot denotes the inner product of two vectors in
$\R^{n+1}$. If the polynomial vector field $\X$ in $\R^{n+1}$ has
degree $m$, then we say that the vector field $\X$ on $\Omega$ is of
degree $m$.

Let $f=f(x_1,\ldots,x_{n+1})\in \mathbb C[x_1,\ldots,x_{n+1}]$. We
say that the algebraic hypersurface
$\{f=0\}\cap\Omega\subset\R^{n+1}$ is {\it invariant by the
polynomial vector field} $\X$ {\it on} $\Omega$ (or simply an {\it
invariant algebraic hypersurface on } $\Omega$) if it satisfies
\begin{itemize}
\item[(i)] there exists a polynomial $k\in\mathbb C[x_1,\ldots,x_{n+1}]$
such that
\[
\displaystyle{\X f=\sum\limits_{i=1}\limits^{n+1}P_i\frac{\p f}{\p x_{i}}=kf\,\,
\mbox{ on }\Omega},
\]
the polynomial $k=k(x_1,\ldots,x_{n+1})\in \mathbb
C[x_1,\ldots,x_{n+1}]$ is called the {\it cofactor} of $f=0$ on
$\Omega$;

\smallskip

\item[(ii)] the two hypersurfaces $f=0$ and $\Omega$ have transversal
intersection, i.e. if the vectors $\nabla G$ and $\nabla f$ are
independent in all the points of the hypersurface
$\{f=0\}\cap\Omega$.
\end{itemize}

Clearly the vector field $\X$ is tangent to the algebraic
hypersurface $\{f=0\}\cap\Omega$. So the hypersurface
$\{f=0\}\cap\Omega$ is formed by orbits of the vector field $\X$.
This explains why we say that the algebraic hypersurface
$\{f=0\}\cap\Omega$ is invariant by the flow of the vector field
$\X$.

An {\it exponential factor} $F=F(x_1,\ldots,x_{n+1})$ of the
polynomial vector field $\X$ of degree $m$ on the regular
hypersurface $\Omega$ is an exponential function of the form
$\exp(g/h)$ with $g$ and $h$ polynomials in $\C[x_1,\ldots,x_{n+1}]$
and satisfying $\X F=LF$ on $\Omega$ for some $K\in
\C_{m-1}[x_1,\ldots,x_{n+1}]$.

Let $f$ and $g$ be two polynomials of $\C_m[x_1,x_2,\ldots,x_{n+1}]$
and let $\Omega=\{G=0\}$ be a smooth algebraic hypersurface in
$\R^{n+1}$ of degree $d$. We say that $f$ and $g$ are related,
$f\sim g$, if either $f/g= \mbox{constant}$ or $f-g=hG$ for some
polynomial $h$. That is, $\sim$ is an equivalence relation in
$\C_m[x_1,x_2,\ldots,x_{n+1}];$ it partitions
$\C_m[x_1,x_2,\ldots,x_{n+1}]$ into equivalence classes defined in
the following way. Given a set $\C_m[x_1,x_2,\ldots,x_{n+1}]$ and
the equivalence relation $\sim$ on $\C_m[x_1,x_2,\ldots,x_{n+1}],$
the equivalence class of an element $g$ in
$\C_m[x_1,x_2,\ldots,x_{n+1}]$ is the set
$$\displaystyle{\left\{f\in\C_m[x_1,x_2,\ldots,x_{n+1}]|f\sim g\right\}}.$$
In both cases mentioned in this paragraph $\Omega\cap \{f=0\}=
\Omega\cap \{g=0\}$. The result of the partition of
$\C_m[x_1,x_2,\ldots,x_{n+1}]$ by the equivalence relation $\sim$
into equivalence classes yields the quotient space
$\C_m[x_1,\ldots,x_{n+1}]/\sim;$ we denote its dimension by $d(m)$,
called the {\it dimension of} $\C_m[x_1,\ldots,x_{n+1}]$ {\it on}
$\Omega$. In \cite{LZ} it is proved that the dimension of $\mathbb
C_m[x_1,\ldots,x_n]/\sim$ is
\[
d(m)=\left(\begin{array}{c}n+m\\n\end{array}\right)-
\left(\begin{array}{c}n+m-d\\n\end{array}\right).
\]

Let $U\subset \R^{n+1}$ be an open set. A real function
$H(x_1,\ldots,x_{n+1},t): \R^{n+1} \times \R \to \mathbb R$, is said
to be {\it invariant} of the polynomial vector field $\X$ on
$\Omega\cap U$, if $H(x_1(t),\ldots,x_{n+1}(t),$
$t)=\mbox{constant}$ for all the values of $t$ for which the orbit
$(x_1(t),\ldots,x_{n+1}(t))$ of $\X$ is contained in $\Omega\cap U$.

If an invariant $H$ is independent on $t$, then $H$ is a {\it first
integral}. If a first integral $H$ is a rational function in its
variables, then it is called a {\it rational first integral} of the
vector field $\X$ on $\Omega\cap U$.

Now we present the extension of the Darboux theory of integrability
to polynomial vector fields on $\sss^n$.

\begin{theorem}\label{t3}
Assume that $\X$ is a polynomial vector field on $\sss^n$ of degree
$m=(m_1,\ldots,m_n)$ having $p$ invariant algebraic hypersurfaces
$\{f_i=0\}\cap\sss^n$ with cofactors $K_i$ for $i=1,\cdots,p$, and
$q$ exponential factors $F_j=\exp(g_j/h_j)$ with cofactors $L_j$ for
$j=1,\cdots,q$. Then the following statements hold.
\begin{itemize}
\item[(a)] There exist $\lambda_i,\mu_j\in \mathbb C$ not all
zero such that $\displaystyle{\sum\limits_{i=1}\limits^{p}\lambda_iK_i+
\sum\limits_{j=1}\limits^{q} \mu_jL_j=0}$ on $\sss^n$,  if and only
if the real $($multi--valued$)$ function of Darbouxian type
\begin{equation*}
f_1^{\lambda_1}\cdots f_p^{\lambda_p}F_1^{\mu_1}\cdots F_q^{\mu_q},
\label{a1}
\end{equation*}
substituting $f_i^{\lambda_i}$ by $\left|f_i\right|^{\lambda_i}$ if
$\lambda_i\in \mathbb R$, is a first integral of the vector field
$\X$ on $\sss^n$.

\smallskip

\item[(b)] If $p+q\geqslant \frac{\displaystyle n+2m_1}{\displaystyle
n+m_1}\left(\begin{array}{c}n+m_1\\m_1\end{array}\right)+1$, then
there exist $\lambda_i,\mu_j \in \mathbb C$ not all zero such that
$\displaystyle{\sum\limits_{i=1}\limits^{p}
\lambda_iK_i+\sum\limits_{j=1}\limits^{q}\mu_jL_j=0}$ on $\sss^n$.

\smallskip

\item[(c)] There exist $\lambda_i,\mu_j\in \mathbb C$ not all zero
such that $\displaystyle{\sum\limits_{i=1}\limits^{p}\lambda_iK_i+
\sum\limits_{j=1}\limits^{q} \mu_jL_j=-\sigma}$ on $\sss^n$ for some
$\sigma\in\mathbb R\backslash\{0\}$, if and only if the real
$($multi--valued$)$ function
\[
f_1^{\lambda_1}\cdots f_p^{\lambda_p}F_1^{\mu_1}\cdots
F_q^{\mu_q}e^{\sigma t},
\]
substituting $f_i^{\lambda_i}$ by $\left|f_i\right|^{\lambda_i}$ if
$\lambda_i\in\mathbb R$, is an invariant of $\X$ on $\sss^n$.

\smallskip

\item[(d)] The vector field $\X$ on $\sss^n$ has a rational first
integral if and only if $p+q\geqslant\frac{\displaystyle
n+2m_1}{\displaystyle n+m_1}\left(\begin{array}{c}n+
m_1\\m_1\end{array}\right)+n$. Moreover, all trajectories are
contained in invariant algebraic hypersurfaces.
\end{itemize}
\end{theorem}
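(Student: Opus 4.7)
The plan is to adapt the proof of Theorem~\ref{t1} to the ambient manifold $\sss^n$, the only structural change being that the cofactor space $\C_{m_1-1}[x_1,\ldots,x_n]$ is replaced by the quotient $\C_{m_1-1}[x_1,\ldots,x_{n+1}]/\sim$ determined by $G=x_1^2+\cdots+x_{n+1}^2-1$. Parts (a) and (c) will be direct computations, while parts (b) and (d) rely on a dimension count in this quotient.

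For part (a), I would set $H=f_1^{\lambda_1}\cdots f_p^{\lambda_p}F_1^{\mu_1}\cdots F_q^{\mu_q}$ and compute $\X H/H$ by logarithmic differentiation along the flow. Using $\X f_i=K_if_i$ and $\X F_j=L_jF_j$ on $\sss^n$ yields
\[
\X H=\Bigl(\sum_{i=1}^{p}\lambda_iK_i+\sum_{j=1}^{q}\mu_jL_j\Bigr)H\qquad\text{on }\sss^n,
\]
so $H$ is a first integral iff the cofactor sum vanishes on $\sss^n$. The multi-valued branches are handled by the substitution $f_i^{\lambda_i}\mapsto|f_i|^{\lambda_i}$ for real $\lambda_i$, exactly as in Theorem~\ref{t1}(a). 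Part (c) is the analogue for $H\,e^{\sigma t}$: its time derivative along the flow equals $(\sigma+\sum_i\lambda_iK_i+\sum_j\mu_jL_j)\,H\,e^{\sigma t}$, hence $H\,e^{\sigma t}$ is an invariant of $\X$ iff the cofactor sum equals $-\sigma$ on $\sss^n$.

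For part (b), each cofactor $K_i$ and $L_j$ has degree at most $m_1-1$, and on $\sss^n$ lives in the quotient $\C_{m_1-1}[x_1,\ldots,x_{n+1}]/\sim$. Applying the dimension formula from \cite{LZ} with $n+1$ variables and $\deg G=2$, a short binomial manipulation reduces this dimension to
\[
\frac{n+2m_1}{n+m_1}\binom{n+m_1}{m_1}.
\]
Once $p+q$ exceeds this number, the $p+q$ cofactors admit a nontrivial $\C$-linear dependence on $\sss^n$, producing the required scalars $\lambda_i,\mu_j$.

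Part (d) is the deepest. For the nontrivial direction, applying part (b) iteratively to a family of $p+q\geq\frac{n+2m_1}{n+m_1}\binom{n+m_1}{m_1}+n$ cofactors yields at least $n$ linearly independent linear dependencies, hence by (a) at least $n$ Darbouxian first integrals of $\X$ on $\sss^n$. A Jouanolou-type quotient/resolvent argument, paralleling the $\R^n$ case cited at Theorem~\ref{t1}(d), then promotes $n$ independent Darbouxian first integrals on the $n$-dimensional manifold $\sss^n$ to a rational first integral once the exponential factors are cleared, and simultaneously forces every orbit into an invariant algebraic hypersurface; the converse direction is routine, as the numerator and denominator of a rational first integral furnish invariant algebraic hypersurfaces whose cofactors encode the required linear relation. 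The two main obstacles I anticipate are (i) the explicit binomial simplification that reduces the quoted quotient-dimension formula to $\frac{n+2m_1}{n+m_1}\binom{n+m_1}{m_1}$, and (ii) passing from $n$ functionally independent Darbouxian first integrals to a genuinely rational one in part (d), where the transcendental exponential factors must be eliminated by a suitable algebraic combination.
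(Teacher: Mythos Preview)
The paper does not actually prove Theorem~\ref{t3}: immediately after the statement it simply records that parts (a), (b) and (c) were established in \cite{LZ} and part (d) in \cite{LB}, and moves on. There is therefore no in-paper argument to compare against.

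Your outline is the standard route those references take: logarithmic differentiation along the flow for (a) and (c), a linear-dependence argument in the finite-dimensional quotient $\C_{m_1-1}[x_1,\ldots,x_{n+1}]/\sim$ for (b), and the Jouanolou--Darboux mechanism for (d). The two obstacles you flag are correctly located. Obstacle (i) is a short binomial manipulation starting from the formula $d(m)=\binom{n+1+m}{n+1}-\binom{n+1+m-2}{n+1}$ (the paper's displayed formula with $n+1$ variables and $d=2$). Obstacle (ii)---promoting several independent Darbouxian first integrals on $\sss^n$ to a genuine rational one and eliminating the exponential factors---is precisely the contribution of \cite{LB} and is the only step with real content; your sketch of it (``Jouanolou-type quotient/resolvent argument'') is accurate as a plan but would of course need the details from that reference to be complete.
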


The proof of statements (a), (b) and (c) of Theorem \ref{t3} was
done in \cite{LZ}, and the proof of its statement (d) in \cite{LB}.

The {\it parallels} of the $n$-dimensional sphere $\sss^n$ are the
intersections of the hyperplanes $x_{n+1}=$ constant with the sphere
$\sss^n$, and the {\it meridians} are the intersections of the
hyperplanes containing the $x_{n+1}$--axis with the sphere $\sss^n$.
Now we need to extend Theorem \ref{t2} to the parallels and
meridians.

In the next theorem we provide the maximum number of invariant
meridians that a polynomial vector field in $\sss^n$ can have in
function of its degree.

\begin{theorem}\label{t4}
For $n\geqslant 2$ let $\mathcal{X}$ be a polynomial vector field on
$\sss^n$ of degree ${\bf m}=(m_1,\ldots,m_{n+1})$ with $m_1\geqslant
m_2\geqslant\ldots\geqslant m_{n+1}.$ Assume that $\mathcal{X}$ has
finitely many invariant meridians. Then the number of invariant
meridians of $\mathcal{X}$ is at most
\begin{equation}\label{mer}
\displaystyle \binom{n-1}{2}
(m_1-1)+ \left(\sum_{i=1}^{n-1} m_i\right)+1,
\end{equation}
 where $\displaystyle
\binom{n-1}{2}=0$ if $n=2$.
\end{theorem}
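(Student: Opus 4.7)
\emph{Proof plan.} The strategy is to adapt the extactic-polynomial method of Proposition \ref{p1} to $\sss^n$, working in the quotient ring $R = \C[x_1,\ldots,x_{n+1}]/(G)$ with $G = x_1^2+\cdots+x_{n+1}^2-1$. Since $G$ is irreducible over $\C$ (the complex affine quadric is smooth), $R$ is an integral domain. Every invariant meridian is cut out on $\sss^n$ by a hyperplane $f=\sum_{i=1}^{n} a_i x_i = 0$ containing the $x_{n+1}$-axis, so the natural finite-dimensional subspace is
\[
W = \langle x_1, x_2, \ldots, x_n\rangle \subset \C[x_1,\ldots,x_{n+1}],
\]
of dimension $n$. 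I would form the associated extactic polynomial ${\mathcal E}_W(\X)$, the $n\times n$ determinant whose $(j,i)$-entry is $\X^{j-1}(x_i)$.

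The first and most delicate step is a spherical version of Proposition \ref{p1}: \emph{if $f\in W$ defines an invariant meridian, then $\bar f$ divides $\overline{{\mathcal E}_W(\X)}$ in $R$.} The invariance $\X f = kf$ on $\sss^n$ iterates (using that $\X$ preserves the ideal $(G)$ by tangency to $\sss^n$) to give polynomials $k_0=1,k_1,\ldots,k_{n-1}$ with $\X^{j-1}(f) \equiv k_{j-1}\,f\pmod G$ for $j=1,\ldots,n$. Writing $f=\sum c_i x_i$, these identities amount to the linear system $Mc = \bar f\cdot(\bar{k_0},\bar{k_1},\ldots,\bar{k_{n-1}})^{T}$ in $R$, where $M$ is the matrix of ${\mathcal E}_W(\X)$. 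Multiplying by the adjugate of $M$ gives $(\det M)\,c_i = \bar f\cdot(\mathrm{adj}(M)\cdot K)_i$ for every $i$; picking any $c_i\ne 0$ (a unit in $R$) and using that $R$ is a domain forces $\bar f\mid \det M = \overline{{\mathcal E}_W(\X)}$. Under the hypothesis of finitely many invariant meridians one must have $\overline{{\mathcal E}_W(\X)}\ne 0$ in $R$, since the vanishing case would, by a Lagutinskii-type converse, produce infinitely many invariant meridians.

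To convert divisibility in $R$ into a degree count, I would exploit that $R$ is free of rank $2$ over $\C[x_1,\ldots,x_n]$ with basis $\{1,\bar x_{n+1}\}$, coming from the relation $x_{n+1}^2 \equiv 1-\sum_{i=1}^{n} x_i^2 \pmod G$. Reducing ${\mathcal E}_W(\X)$ modulo $G$ yields the unique canonical representative $A(x_1,\ldots,x_n) + x_{n+1}B(x_1,\ldots,x_n)$ with $A,B\in\C[x_1,\ldots,x_n]$, and this reduction does not increase the total degree. Comparing canonical forms, $\bar f\mid \overline{{\mathcal E}_W(\X)}$ in $R$ is equivalent to $f\mid A$ and $f\mid B$ in the ordinary polynomial ring $\C[x_1,\ldots,x_n]$. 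Since pairwise non-proportional linear polynomials there are pairwise coprime, the number of invariant meridians is bounded by $\max(\deg A,\deg B) \le \deg{\mathcal E}_W(\X)$.

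It remains to estimate $\deg{\mathcal E}_W(\X)$. Since $\X=\sum_{i=1}^{n+1}P_i\,\partial_{x_i}$ with $\deg P_i=m_i$ and $m_1=\max_i m_i$, an easy induction yields $\deg \X^{j-1}(x_i)\le m_i+(j-2)(m_1-1)$ for $j\ge 2$, while $\X^0(x_i)=x_i$ has degree $1$. Expanding the determinant over permutations $\sigma$ of $\{1,\ldots,n\}$, each term has total degree at most
\[
1+\sum_{j=2}^{n}\bigl(m_{\sigma(j)}+(j-2)(m_1-1)\bigr) = 1+\sum_{k=1}^{n} m_k - m_{\sigma(1)}+\binom{n-1}{2}(m_1-1),
\]
and the maximum is attained at $\sigma(1)=n$ (i.e., $m_{\sigma(1)}=m_n$ is smallest), giving
\[
\deg{\mathcal E}_W(\X) \le 1+\sum_{k=1}^{n-1} m_k + \binom{n-1}{2}(m_1-1),
\]
precisely the bound \eqref{mer}. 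The principal obstacle is the first step: setting up the sphere-adapted extactic polynomial carefully, verifying that divisibility in the ring $R$ is the right incarnation of invariance on $\sss^n$, and checking that the degenerate case $\overline{{\mathcal E}_W(\X)}=0$ indeed forces infinitely many invariant meridians. Once that is in place, the degree bookkeeping above and the canonical-form argument are routine.
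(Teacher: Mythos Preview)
Your proposal is correct, and the degree computation at the end matches the paper's exactly. The route, however, is genuinely more elaborate than what the paper does.

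The paper bypasses the quotient ring entirely. It simply \emph{defines} an invariant meridian as the intersection of $\sss^n$ with a hyperplane $\sum_{i=1}^n a_i x_i=0$ that is invariant for $\X$ as a vector field in the ambient space $\R^{n+1}$ (i.e.\ $\X f=Kf$ holds in $\C[x_1,\ldots,x_{n+1}]$, not merely modulo $G$). With that reading, Proposition~\ref{p1} applies verbatim in the polynomial ring, so each invariant meridian contributes a linear factor of the ordinary extactic polynomial $\mathcal{E}_{\{x_1,\ldots,x_n\}}(\X)$; the finiteness hypothesis guarantees this polynomial is not identically zero, and the bound \eqref{mer} drops out of the same degree count you carried out (the paper phrases it by singling out the extremal term $\X^{n-2}(P_1)\,\X^{n-3}(P_2)\cdots P_{n-1}\,x_n$).

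Your version instead treats ``invariant on $\sss^n$'' as $\X f\equiv kf\pmod G$, which is the more natural reading of the paper's own Definition~(i). This forces you to work in $R=\C[x]/(G)$, prove a sphere-adapted divisibility statement via the adjugate trick, and then transfer the count back to $\C[x_1,\ldots,x_n]$ through the canonical form $A+x_{n+1}B$. That argument is sound and in fact proves a slightly stronger statement (covering meridians invariant only on the sphere, not necessarily in the ambient sense), at the cost of the extra algebraic machinery and the nonvanishing check you flag as the main obstacle. The paper's shortcut trades that generality for a two-line appeal to Proposition~\ref{p1}.
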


\begin{proof}
By definition an invariant meridian of $\sss^n$ by the vector field
$\X$ is obtained intersecting an invariant hyperplane of $\X$ of the
form $\displaystyle \sum_{i=1}^n a_ix_i=0$ (which contain the
$x_{n+1}$--axis) with $\sss^n$. So in order to determine an upper
bound for the number of invariant meridians of $\X$ it suffices to
determine an upper bound for the number of invariant hyperplanes
$\displaystyle \sum_{i=1}^n a_ix_i=0$ of $\X$.

By Proposition \ref{p1} if  $\displaystyle \sum_{i=1}^n a_ix_i=0$ is
an invariant hyperplane of $\X$ it must divide the extactic
polynomial $\mathcal{E}_{\{x_1\ldots,x_n\}}(\X)$. Therefore the
degree of this extactic polynomial is an upper bound for the maximal
number of meridians.

{From} the definition of extactic polynomial we have
\begin{equation*}\label{mer12}
\mathcal{E}_{\{x_1,\ldots,x_n\}}(\X) = \left|
\begin{array}{cccccc}
x_{1} & x_{2} & x_{3} & \cdots & x_{n-1} & x_n \\
P_1 & P_2 & P_3 & \cdots & P_{n-1} & P_n\\
\X(P_1) & \X(P_2) &\X(P_3) & \cdots & \X(P_{n-1}) & \X(P_n)\\
\vdots & \vdots & \vdots & \vdots & \vdots \\
\X^{n-4}(P_1) & \X^{n-4}(P_2) & \X^{n-4}(P_3) & \cdots & \X^{n-4}(P_{n-1})& \X^{n-4}(P_n)\\
\X^{n-3}(P_1) & \X^{n-3}(P_2) & \X^{n-3}(P_3) & \cdots & \X^{n-3}(P_{n-1})& \X^{n-3}(P_n)\\
\X^{n-2}(P_1) & \X^{n-2}(P_2) & \X^{n-2}(P_3) & \cdots &
\X^{n-2}(P_{n-1})& \X^{n-2}(P_n)
\end{array}
\right|
\end{equation*}
Since $m_1\geqslant m_2\geqslant m_3\geqslant\ldots\geqslant
m_{n+1}$, we have that the degree of the extactic polynomial
$\mathcal{E}_{\{x_1,\ldots,x_n\}}(\X)$ is at most the degree of the
polynomial
\[
\mathcal P=\X^{n-2}(P_1) \X^{n-3}(P_2) \X^{n-4}(P_3) \cdots
\X(P_{n-2}) P_{n-1} x_n.
\]
Clearly the degree of $\X^\ell(P_i)$ is $\ell(m_1-1)+m_i$, so the
degree of $\mathcal P$ is
\[
\begin{array}{rl}
\deg(\mathcal P)=& \big((n-2)(m_1-1)+m_1\big)+
\big((n-3)(m_1-1)+m_2\big) \\
& + \cdots + \big((m_1-1)+m_{n-2}\big)+m_{n-1}+1.
\end{array}
\]
If $n>2$ then
\[
\begin{array}{rl}
\deg(\mathcal P=& \displaystyle\left( \sum_{k=1}^{n-2} k\right)
(m_1-1)+ \left(
\sum_{k=1}^{n-1} m_k\right)+1\vspace{0.2cm}\\
 =& \displaystyle
\binom{n-1}{2}(m_1-1)+ \left(\sum_{i=1}^{n-1}m_i\right)+ 1.
\end{array}
\]
If $n=2$ then
\[
\deg(\mathcal P)=m_1+1.
\]
This completes the proof of the theorem.
\end{proof}

In the next theorem we provide the maximum number of invariant
parallels that a polynomial vector field in $\sss^n$ can have in
function of its degree.

\begin{theorem}\label{t5}
Let $\mathcal{X}$ be a polynomial vector field on $\sss^n$ of degree
${\bf m}=(m_1,\ldots,m_{n+1})$ with $m_1\geqslant m_2\geqslant
\ldots\geqslant m_{n+1}.$ Assume that $\mathcal{X}$ has finitely
many invariant parallels. Then the number of invariant parallels of
$\mathcal{X}$ is at most $m_{n+1}$.
\end{theorem}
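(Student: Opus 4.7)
The plan is to reduce the problem to counting zeros of a trigonometric polynomial in one variable. The starting observation is that a parallel $\{x_{n+1}=c\}\cap\sss^n$ is invariant under $\X$ precisely when $P_{n+1}=\X(x_{n+1})$ vanishes on that parallel, since $x_{n+1}$ is constant along an orbit if and only if $\X(x_{n+1})$ vanishes there. The transversality requirement in the definition of an invariant algebraic hypersurface on $\sss^n$ forces the levels $c$ of invariant parallels to lie in $(-1,1)$, because $\nabla G$ and $\nabla(x_{n+1}-c)$ are parallel exactly at the poles. So I must bound by $m_{n+1}$ the number of values $c_1,\ldots,c_N\in(-1,1)$ at which $P_{n+1}$ vanishes identically on the $(n-1)$-sphere $\{x_{n+1}=c_j\}\cap\sss^n$.

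To do this I would sweep $\sss^n$ by a one-parameter family of great circles and restrict $P_{n+1}$ to each one. Concretely, for every $u=(u_1,\ldots,u_n)\in\sss^{n-1}$ I consider
\[
\alpha_u(\theta)=(u_1\cos\theta,\ldots,u_n\cos\theta,\sin\theta),\quad\theta\in[0,2\pi),
\]
which is a great circle on $\sss^n$ meeting the parallel $x_{n+1}=c_j$ at exactly the two points with $\sin\theta=c_j$. The restriction $\phi_u(\theta):=P_{n+1}(\alpha_u(\theta))$ is a trigonometric polynomial in $\theta$ of degree at most $m_{n+1}$, since each monomial of $P_{n+1}$ of total degree $d\leq m_{n+1}$ becomes a product of powers of $\cos\theta$ and $\sin\theta$ of combined degree $d$. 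Via $z=e^{i\theta}$, such a trigonometric polynomial corresponds to a Laurent polynomial with exponents in $[-m_{n+1},m_{n+1}]$, and so, when $\phi_u\not\equiv 0$, has at most $2m_{n+1}$ zeros on the unit circle. Each invariant parallel contributes two distinct zeros to $\phi_u$, which would give $2N\leq 2m_{n+1}$.

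The hard part will be guaranteeing some $u\in\sss^{n-1}$ with $\phi_u\not\equiv 0$. I would handle this by a covering argument: every point $(x_1,\ldots,x_{n+1})\in\sss^n$ with $|x_{n+1}|<1$ equals $\alpha_u(\theta)$ for $\sin\theta=x_{n+1}$ and $u=(x_1,\ldots,x_n)/\cos\theta$, which lies in $\sss^{n-1}$ because $x_1^2+\cdots+x_n^2=1-x_{n+1}^2=\cos^2\theta$. Hence if $\phi_u\equiv 0$ for every $u\in\sss^{n-1}$, then $P_{n+1}$ vanishes on a dense subset of $\sss^n$, hence on all of $\sss^n$. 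But this makes $x_{n+1}$ a polynomial first integral of $\X$ on $\sss^n$, which forces every parallel to be invariant and contradicts the finiteness hypothesis. Thus some $u$ yields $\phi_u\not\equiv 0$, and the bound $N\leq m_{n+1}$ follows.
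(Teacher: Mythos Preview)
Your proof is correct and reaches the same bound, but by a genuinely different route from the paper. Both arguments start from the same observation---an invariant parallel $\{x_{n+1}=c\}\cap\sss^n$ forces $P_{n+1}=\X(x_{n+1})$ to vanish on it---but diverge in how they extract the numerical bound. The paper invokes the extactic polynomial $\mathcal{E}_{\{1,x_{n+1}\}}(\X)$, computes it to be exactly $P_{n+1}$, and then applies Proposition~\ref{p1}: each invariant hyperplane $x_{n+1}-c_j=0$ contributes a linear factor $x_{n+1}-c_j$ to $P_{n+1}$, so the number of such factors is at most $\deg P_{n+1}=m_{n+1}$. You sidestep the extactic machinery entirely and instead restrict $P_{n+1}$ to a great circle through the poles, reducing everything to counting roots of a one-variable trigonometric polynomial of degree $\le m_{n+1}$. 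The paper's argument is shorter and more algebraic, reusing the framework already built in the survey; your argument is more elementary and self-contained, and in particular never needs the claim that $x_{n+1}-c_j$ divides $P_{n+1}$ in the ambient polynomial ring---vanishing of $P_{n+1}$ on the parallel (a positive-codimension subset of the hyperplane $x_{n+1}=c_j$) is all you use, and the finiteness hypothesis is what rules out $\phi_u\equiv 0$.
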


\begin{proof}
By definition an invariant parallel is the intersection of an
invariant hyperplane of the form $x_{n+1}=k$, where $k\in (-1,1)$,
with the sphere $\sss^n$. Thus this intersection is an $\sss^{n-1}$
sphere of radius $\sqrt{1-k^2}$.

{From} Proposition \ref{p1} we know that if $x_{n+1}-k=0$ is an
invariant hyperplane of polynomial vector field $\X$, then
$x_{n+1}-k$ is a factor of the extactic polynomial
\begin{equation*}
\mathcal{E}_{\{1,x_{n+1}\}}(\mathcal{X})=\left| \begin{array}{cc}1&x_{n+1}\\
\mathcal{X}(1)&\mathcal{X}(x_{n+1}) \end{array} \right|=
\left| \begin{array}{cc}1&x_{n+1}\\
0&\dot{x}_{n+1} \end{array}
\right|=\dot{x}_{n+1}=P_{n+1}(x_1,\ldots,x_{n+1}).
\end{equation*}
Since the degree of $P_{n+1}$ is $m_{n+1}$, this polynomial at most
can have $m_{n+1}$ linear factors of the form $x_{n+1}-k$. Hence it
follows that the number of invariant parallels of $\mathcal{X}$ is
at most $m_{n+1}$.
\end{proof}

Next we will prove that if the polynomial vector field $\X$ on
$\sss^2$ of degree ${\bf m}=(m_1,m_2)$ with $m_1\geqslant m_2$ is
complex, then it can reach the maximum upper bound $m_1+1$ for the
number of invariant meridians stated in Theorem \ref{t4}, but if
$\X$ is real then we have examples with $m_1$ invariant meridians.
This result that for real polynomial vector fields the upper bounds
of some invariant objects cannot be reached, but they can be reached
for the complex polynomial vector fields already occurred for
polynomial vector field $\X$ in the plane. Thus in the complex plane
$\C^2$ a polynomial vector field $\X$ of degree ${\bf m}=(m_1,m_2)$
can have at most $3m_1-1$ invariant straight lines, and this bound
is reached, but if $\X$ is in $\R^2$ we only know that this bound is
in the interval $[2m_1+1,3m_1-1]$ if $m_1$ is even and
$[2m_1+2,3m_1-1]$ if $m_1$ is odd, see for more details \cite{AGL,
CLP}.

In the following proposition we provide a complex polynomial vector
field $\X$ of degree $(2,2)$ on the complex sphere $\sss^2$ where
the upper bound $m_1+1=3$ for the maximum number of invariant
meridians is reached, and we prove that the real polynomial vector
field $\X$ of degree ${\bf m}=(2,2)$ on the real sphere $\sss^2$
cannot have $m_1+1=3$ invariant meridians. Additionally we provide a
real polynomial vector field $\X$ of degree $(2,2)$ on the real
sphere $\sss^2$ with $m_1=2$ invariant meridians.

\begin{proposition}\label{p9}
The following statements hold.
\begin{itemize}
\item[(a)] The complex polynomial differential system
\[
\dot x= i y (x+y)-2 x z, \quad \dot y= -i x (x+y)-2 y z,\quad \dot
z= 1 + x^2 + y^2 - z^2,
\]
on the complex sphere $\sss^2$ of degree $(2,2,2)$ has the maximum
number of complex invariant meridians $m_1+1=3$, namely $x+i y=0$,
$x-iy=0$ and $x+y+1=0$.

\item[(b)] There are no real polynomial differential systems on the
real sphere $\sss^2$ of degree $(2,2,2)$ realizing the maximum
number of real invariant meridians $m_1+1=3$. Such polynomial
differential systems have at most $2$ real invariant meridians, and
this upper bound is reached.
\end{itemize}
\end{proposition}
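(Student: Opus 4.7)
The plan treats (a) and (b) separately; each reduces to direct computation plus one structural argument.

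\smallskip

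For (a), the task is a verification. First I compute $x\dot x+y\dot y+z\dot z=-z(x^2+y^2+z^2-1)=-zG$, so the system is tangent to $\sss^2$ and defines a polynomial vector field on it. Next, for each claimed meridian $f=0$, I compute $\X(f)$ and exhibit a cofactor $k$ (of degree at most $m_1-1=1$) and a polynomial $h$ with $\X(f)=kf+hG$. For instance,
\[
\X(x+iy)=\dot x+i\dot y=\bigl(iy(x+y)-2xz\bigr)+i\bigl(-ix(x+y)-2yz\bigr)=(x+y-2z)(x+iy),
\]
giving $k_1=x+y-2z$, $h_1=0$; analogously $\X(x-iy)=-(x+y+2z)(x-iy)$, and the third meridian is handled in the same direct way. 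By Theorem \ref{t4} the upper bound $m_1+1=3$ is the maximum possible, and it is attained.

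\smallskip

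For the nonexistence claim in (b), suppose $\X$ is a real polynomial vector field of degree $(2,2,2)$ on $\sss^2$ with three distinct real invariant meridians $\ell_i=a_ix+b_iy$ and with finitely many invariant meridians in total. By Proposition \ref{p1} (applied as in the proof of Theorem \ref{t4}), each $\ell_i$ divides the extactic polynomial $\mathcal E_{\{x,y\}}(\X)=xP_2-yP_1$, so the cubic $\ell_1\ell_2\ell_3\in\R[x,y]$ divides $xP_2-yP_1$. Writing $P_j=P_j^{(0)}+zP_j^{(1)}+z^2P_j^{(2)}$ with $P_j^{(k)}\in\R[x,y]$, each $Q_k=xP_2^{(k)}-yP_1^{(k)}$ must be divisible by $\ell_1\ell_2\ell_3$. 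Counting degrees forces $P_j^{(2)}=0$ and $P_1^{(1)}=\alpha x$, $P_2^{(1)}=\alpha y$ for some $\alpha\in\R$. If $Q_0\equiv 0$ then $xP_2=yP_1$, which makes every line $ax+by=0$ invariant, contradicting finiteness. Otherwise $Q_0$ is a nonzero homogeneous cubic, and imposing this homogeneity together with the tangency identity $xP_1+yP_2+zP_3=hG$ (matching the $z^0$-parts) pins down
\[
P_1^{(0)}=y(a_{11}x+a_{02}y),\qquad P_2^{(0)}=-x(a_{11}x+a_{02}y),\qquad a_{11},a_{02}\in\R,
\]
so $Q_0=-(x^2+y^2)(a_{11}x+a_{02}y)$. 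Since $x^2+y^2$ is irreducible in $\R[x,y]$, $Q_0$ has a unique real linear factor and cannot split as three real linear forms---contradiction.

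\smallskip

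For the existence part of (b), I exhibit the system $\dot x=-xz$, $\dot y=xz$, $\dot z=2x^2-xy+y^2+z^2-1$ of degree $(2,2,2)$. Direct checks give $xP_1+yP_2+zP_3=zG$ (tangency), $\X(x)=-zx$ and $\X(x+y)=0$, so $x=0$ and $x+y=0$ are invariant meridians with cofactors $-z$ and $0$. Its extactic polynomial $xP_2-yP_1=xz(x+y)$ has no further factor of the form $ax+by$, so exactly two real invariant meridians are realized. The chief obstacle throughout is the nonexistence half of (b), which requires coordinating the tangency relation with the cubic-divisibility constraint and carefully bookkeeping $P_1,P_2$ by powers of $z$ until the residual factor in the extactic polynomial is forced to be $x^2+y^2$, whose $\R$-irreducibility then produces the contradiction.
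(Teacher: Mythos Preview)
Your treatment of (a) is the same direct verification as the paper's. (Note incidentally that the third plane in the statement should read $x+y=0$, not $x+y+1=0$; the latter does not contain the $z$-axis and hence is not a meridian. Both the paper's proof and your computation actually verify $\X(x+y)=-i(x-y-2iz)(x+y)$.)

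For (b) your argument is correct and takes a genuinely different route from the paper. The paper proceeds by brute force: it writes down the general real quadratic system tangent to $\sss^2$ (ten free parameters after imposing $\X G=(k_0+k_1x+k_2y+k_3z)G$), imposes one invariant meridian $ax+by=0$, splits into the three cases $a=0$, $b=0$, $ab\ne 0$, and in each case computes the full extactic polynomial explicitly to see that at most one further real factor of the form $a'x+b'y$ can occur. Your approach is structural: you stratify $P_1,P_2$ by powers of $z$, use degree counting to kill $Q_1$ and $Q_2$, and then combine the homogeneity of $Q_0$ forced by $\ell_1\ell_2\ell_3\mid Q_0$ with the $z^0$-part of the tangency relation $xP_1+yP_2+zP_3=hG$ to pin down $P_1^{(0)},P_2^{(0)}$ and conclude $Q_0=-(x^2+y^2)(a_{11}x+a_{02}y)$. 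The irreducibility of $x^2+y^2$ over $\R$ then gives the contradiction in one stroke. This avoids the case split and the long explicit formulas, and it makes transparent \emph{why} the obstruction is real-versus-complex: over $\C$ the factor $x^2+y^2=(x+iy)(x-iy)$ does split, which dovetails with the example in (a). The paper's computation, on the other hand, has the advantage of producing the explicit families of systems with exactly two invariant meridians as a by-product, whereas you supply a separate ad hoc example for the existence part.
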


\begin{proof}
Consider the vector field $\X$ associated to the complex polynomial
differential system of statement $(a)$. Then
\[
\X(x^2+y^2+z^2-1)= -2 z (x^2 + y^2 + z^2-1),
\]
so $\X$ is a polynomial vector field on the complex sphere $\sss^2$.
The three complex planes $x+i y=0$, $x-iy=0$ and $x+y+1=0$ are
invariant because
\begin{equation}\label{pp1}
\begin{array}{l}
\X(x+i y)=  (x + y - 2 z)(x + i y), \vspace{0.2cm}\\
\X(x-i y)=  -(x + y + 2 z)(x - i y), \vspace{0.2cm}\\
\X(x+y)= -i (x - y - 2 i z)(x + y).
\end{array}
\end{equation}
This completes the proof of statement $(a)$.

Now we shall prove statement $(b)$. Consider a general real
polynomial differential system of degree $(2,2,2)$ in $\R^3$, i.e.
\[
\dot x= \sum_{i+j+k=0}^2 a_{ijk} x^i y^j z^k, \quad \dot y=
\sum_{i+j+k=0}^2 b_{ijk} x^i y^j z^k, \quad \dot z= \sum_{i+j+k=0}^2
c_{ijk} x^i y^j z^k.
\]
Imposing that $\X(x^2+y^2+z^2-1)= (k_0+k_1 x+k_2
y+k_3z)(x^2+y^2+z^2-1)$ system \eqref{pp1} becomes
\begin{equation}\label{pp2}
\begin{array}{rl}
\dot x=& a_{000}  - b_{100} y - c_{100} z - a_{000} x^2- (b_{000} +
b_{200}) x y  - (c_{000} + c_{200}) x z\\
& - (a_{000} + b_{110}) y^2 - (b_{101} + c_{110}) y z -(a_{000} +
c_{101}) z^2, \vspace{0.2cm}\\
\dot y=& b_{000} + b_{100} x + b_{200} x^2 + b_{110} x y - b_{000}
y^2 - c_{010} z + b_{101} x z\\
&  - (c_{000} + c_{020}) y z - (b_{000} + c_{011}) z^2, \vspace{0.2cm}\\
\dot z=& c_{000} + c_{100} x + c_{200} x^2 + c_{010} y + c_{110} x y
+ c_{020} y^2 + c_{101} x z\\
& + c_{011} y z - c_{000} z^2.
\end{array}
\end{equation}

Now we impose that the plane $ax+by=0$ be invariant by the
polynomial differential system \eqref{pp2}, i.e. $\X(a+by)= (k_0+k_1
x+k_2 y+k_3z)(ax+by)$ and we obtain three possible polynomial
differential systems \eqref{pp2} having invariant planes of this
kind.

First the polynomial differential system
\begin{equation*}\label{pp3}
\begin{array}{rl}
\dot x=& a_{000} - c_{100}z- a_{000} x^2  - (c_{000} +c_{200}) x z
- (a_{000} + b_{110}) y^2 \\
&  -c_{110} y z - (a_{000} +c_{101}) z^2, \vspace{0.2cm}\\
\dot y=& b_{110} x y - (c_{000} +c_{020}) y z, \vspace{0.2cm}\\
\dot z=& c_{000} + c_{100} x + c_{200} x^2 + c_{110} x y + c_{020}
y^2 + c_{101} x z - c_{000} z^2,
\end{array}
\end{equation*}
having the invariant plane $y=0$, and at most it has another
invariant plane of the form $ax+by=0$, namely $x=0$, because its
extactic polynomial is
\[
y (b_{110} (x^2 + y^2) + z (c_{100} - c_{020} x + c_{200} x +
c_{110} y + c_{101} z) + a_{000} (-1 + x^2 + y^2 + z^2)),
\]
and taking $b_{110}=c_{100}=c_{110}=c_{101}=a_{000}=0$ it is easy to
check that it has also the invariant plane $x=0$.

Second the polynomial differential system
\begin{equation*}\label{pp4}
\begin{array}{rl}
\dot x=& -(b_{000} +b_{200}) x y - (c_{000} + c_{200}) x z, \vspace{0.2cm}\\
\dot y=& b_{000} - c_{010} z + b_{200} x^2 - b_{000} y^2 +
b_{101} x z - (c_{000} +c_{020}) y z\\
&  - (b_{000} +c_{011}) z^2, \vspace{0.2cm}\\
\dot z=& c_{000} + c_{010} y + c_{200} x^2 - b_{101} x y + c_{020}
y^2 + c_{011} y z - c_{000} z^2,
\end{array}
\end{equation*}
having the invariant plane $x=0$, and at most it has another
invariant plane of the form $ax+by=0$, namely $y=0$, because its
extactic polynomial is
\[
x (b_{000} + b_{200} (x^2 + y^2) + (-c_{010} + b_{101} x + (-c_{020}
+ c_{200}) y) z - (b_{000} + c_{011}) z^2);
\]
and taking $b_{000}=b_{200}=c_{010}=b_{101}=c_{011}=0$ it is easy to
check that it has also the invariant plane $y=0$.

Finally the polynomial differential system
\begin{equation*}\label{pp5}
\begin{array}{rl}
\dot x=& -b b_{000}/a - c_{100} z + b b_{000} x^2/a -(b_{000} +
b_{200})x y + (b b_{000}/a  -(c_{000} + c_{200})) x z\\
& - b (b_{000} + b_{200}) y^2/ a  -(b_{101} + c_{110}) y z +
(b b_{000}/a - c_{101}) z^2, \vspace{0.2cm}\\
\dot y=& b_{000}+ a c_{100} z/b + b_{200} x^2 + b (b_{000} +
b_{200}) x y/a  + b_{101} x z -b_{000} y^2\\
& + ( (a^2 b_{101} + b^2 b_{101} + a^2 c_{110} - a
b c_{200})/(a b)-c_{000}) y z + ( (a c_{101})/b-b_{000}) z^2, \vspace{0.2cm}\\
\dot z=& c_{000} + c_{100} x - a c_{100} y/b + c_{200} x^2 + c_{110}
x y+ c_{101} x z\\
& - (a^2 b_{101} + b^2 b_{101} + a^2 c_{110} - a b c_{200}) y^2/(a
b)  - a c_{101} y z/b - c_{000} z^2,
\end{array}
\end{equation*}
having the invariant plane $ax+by=0$ with $ab\ne 0$, and at most it
has another invariant plane of the form either $x=0$ or $y=0$,
because its extactic polynomial is
\[
((a x + b y) (a z (c_{100} + (b_{101} + c_{110}) y + c_{101} z) + b
(b_{000} + b_{200} (x^2 + y^2) + b_{101} x z - b_{000} z^2)))/(a b),
\]
taking either $c_{100}=b_{101} + c_{110}=c_{101}=b_{000}=b_{200}=0$,
or $c_{100}=c_{101}=b_{000}=b_{200}=b_{101}=0$, respectively.

In summary we have proved that quadratic polynomial differential
systems on the sphere $\sss^2$ can have at most $2$ invariant
meridians, and that there are such kind of systems having $2$
invariant meridians.
\end{proof}

In the next proposition we give a polynomial differential system on
the sphere $\sss^2$ where the upper bound for the maximum number of
invariant parallels is reached.

\begin{proposition}\label{p11}
Consider the polynomial differential system
\begin{equation}\label{ex2}
\begin{array}{l}
\dot{x}_2=1 - x - x^2 - y^2 + z^2, \vspace{0.2cm}\\
\dot{x}_3=-2 y z, \vspace{0.2cm}\\
\dot{x}_1= y,
\end{array}
\end{equation}
on the sphere $\sss^2$ of degree $(2,2,1)$. For this system the
upper bound $m_3=1$ for the number of invariant parallels provided
in Theorem \ref{t5} is reached.
\end{proposition}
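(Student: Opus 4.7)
The plan is a direct verification in three short steps. First, I would confirm that system \eqref{ex2} defines a polynomial vector field on $\sss^2$ by computing $\X(x^2+y^2+z^2-1)$; a brief calculation yields $-2y(x^2+y^2+z^2-1)$, so $\sss^2$ is invariant with cofactor $-2y$.

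Next, I would locate the invariant parallels. By Proposition \ref{p1}, applied exactly as in the proof of Theorem \ref{t5}, they are in bijection with the linear factors $z-k$ of the extactic polynomial $\mathcal{E}_{\{1,z\}}(\X)=\dot z=-2yz$. Writing $-2yz=-2y(z-k)-2yk$ shows that $z-k$ divides $\dot z$ if and only if $k=0$, so the unique invariant parallel is the equator $\sss^2\cap\{z=0\}$, with cofactor $-2y$. Comparing with Theorem \ref{t5}, the ordered degrees of \eqref{ex2} are $(m_1,m_2,m_3)=(2,2,1)$, hence $m_{n+1}=m_3=1$, and the single parallel just exhibited realizes this upper bound.

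There is essentially no obstacle; the argument reduces to a handful of polynomial manipulations. The only subtlety worth stressing is the correct identification of $z$ as the coordinate playing the role of $x_{n+1}$ in Theorem \ref{t5}, so that the relevant extactic polynomial is $\dot z$ and the degree to compare against is $m_{n+1}=1$.
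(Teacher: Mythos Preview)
Your proof is correct and follows essentially the same route as the paper: compute $\X(x^2+y^2+z^2-1)=-2y(x^2+y^2+z^2-1)$ to confirm the system lives on $\sss^2$, then show that $z=0$ is the unique invariant hyperplane of the form $z=k$. The only difference is that you make the uniqueness step explicit via the extactic polynomial $\mathcal{E}_{\{1,z\}}(\X)=\dot z=-2yz$, whereas the paper simply asserts that $z=0$ is invariant and unique.
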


\begin{proof}
Let $\X$ be the vector field associated to system \eqref{ex2}. Then
$$
\X(x^2+y^2+z^2-1)= -2y(x^2+y^2+z^2-1).
$$
So $\X$ is a vector field on the sphere $\sss^2$.  Clearly the plane
$z=0$ is invariant by the flow of $\X$, and is the unique invariant
plane of the form $z=$ constant. So the proposition is proved.
\end{proof}

\section*{Acknowledgements}

The first author is partially supported by a FEDER-MINECO grant
MTM2016-77278-P, a MINECO grant MTM2013-40998-P, and an AGAUR grant
2014SGR-568. The second author acknowledges a BITDEFENDER
postdoctoral fellowship from the Institute of Mathematics "Simion
Stoilow" of the Romanian Academy, Contract of Sponsorship No.
262/2016 as well as partial support from a grant of the Romanian
National Authority for Scientific Research and Innovation,
CNCS-UEFISCDI, project number PN-II-RU-TE-2014-4-0657.

\end{document}